\theoremstyle{plain}
\newtheorem{theorem}{Theorem}
\newtheorem{cor}[theorem]{Corrolary}
\newtheorem{lemma}[theorem]{Lemma}
\theoremstyle{remark}
\theoremstyle{definition}
\newtheorem{definition}{Definition}
\DeclareMathOperator{\pr}{pr}
\DeclareMathOperator{\rp}{rp\,}
\DeclareMathOperator{\un}{un}
\begin{document}

\title[Measures on projections in a $W^*$-algebra of type $I_2$]{Measures on projections
in a $W^*$-algebra of type $I_2$}

\author[A.N. Sherstnev]{A.N. Sherstnev*}

\newcommand{\acr}{\newline\indent}

\address{\llap{*} Department of Mathematical Analysis
\acr Kazan Federal University
\acr Kremlyovskaya St., 18 \acr Kazan 420008 \acr RUSSIAN
FEDERATION}

\email{Anatolij.Sherstnev@ksu.ru}

\subjclass{Primary 46L10, 46L51}

\keywords{Measure on projections, $W^*$-algebra, orthogonal vector measure}

\begin{abstract}
It is shown that for every measure $m$ on
projections in a  $W^*$-algebra of type  $I_2$, there exists a
Hilbert-valued orthogonal vector measure  $\mu$ such that
$\|\mu(p)\|^2= m(p)$ for every projection  $p$. With regard to J.
Hamhalter's result (Proc. Amer. Math. Soc., 110 (1990), 803--806) it
means that the assertion is valid for an arbitrary
$W^*$-algebra.
\end{abstract}

\maketitle

\smallskip
It is well known that  the problem of the extension of a measure on projections to a linear
functional was positively solved for $W^*$-algebras without type $I_2$ direct summand. (A lucid
exposition of Gleason-Christensen-Yeadon'results see in \cite{Mae}.)
In view of  this, it became a good tradition to exclude the $W^*$-algebras with direct summand of type $I_2$ when
measures on projections are investigated.
In this respect, there is an interesting paper by J. Hamhalter \cite{Ham} which describes the connection
between measures
on projections in conventional sense and  $H$-valued ($H$ is a complex Hilbert space)
  orthogonal vector measures.
Specifically, it has been proved in \cite{Ham} (though expressed in a slightly different form) that if  $m$ is a measure
on projections in a  $W^*$-algebra $\mathcal{A}$ without type $I_2$ direct summand, then there exists
a $H$-valued   orthogonal vector measure  $\mu$ on projections in $\mathcal{A}$ such that
 $\|\mu(p)\|^2= m(p)$ for every $p\in\mathcal{A}$. The mentioned proof (in a few lines) of this assertion
 is based on Gleason-Christensen-Yeadon's result.

\smallskip
In this paper we give a construction allowing to obtain a proof of this assertion
for  $W^*$-algebras of type $I_2$ and therefore  for  arbitrary  $W^*$-algebras.
The author is greatly indebted to Lugovaya G.D. for useful discussions.

\bigskip
\centerline{\textbf{Preliminaries}}

\bigskip
 Let $\mathcal{A}$ be a $W^*$-algebra, and $\mathcal{A}^{\pr},\ \mathcal{A}^{\un},\ \mathcal{A}^+$
denote the sets of orthogonal
projections, unitaries, positive elements in $\mathcal{A}$, respectively.
We will denote by $\rp(x)$ the range projection of $x\in \mathcal{A}^+$. It is the least projection
of all projections
$p\in\mathcal{A}^{\pr}$ such that $px=x$. It should be noted that $\rp(x)=\rp(x^{1/2})$.
The basic notions those we talk about in this paper  are described by the following definitions.
(Monograph \cite{She} gives further details of  problems related to  measures on projections
in von Neumann algebras.)

\begin{definition}
 Let  $\mathcal{A}$  be a $W^*$-algebra. A mapping  $m\,:\, \mathcal{A}^{\pr}\to
\mathbb{R}_+$ is called \textit{a measure on projections} if the following condition is  satisfied:
$$
p=\sum\limits_{i\in I}^{}p_i\ \ (p,p_i\in \mathcal{A}^{\pr},\;p_ip_j=0\
(i\ne j))\Rightarrow m(p)=\sum\limits_{i\in I}^{}m(p_i).$$
Here, the series are understood as limits of the nets of finite sums (in $w^*$-topology for projections).
\end{definition}

\begin{definition}
Let $\mathcal{A}$  be a $W^*$-algebra, $H$ be a complex Hilbert space.
A mapping $\mu:\mathcal{A}^{\pr}\to H$ is called \textit{an orthogonal vector measure} if for any set
$(p_j)_{j\in J}\subset \mathcal{A}^{\pr}$ of mutually orthogonal projections the following two conditions
are satisfied:
\begin{itemize}
\item[(i)] the set $(\mu(p_j))_{j\in J}$ is orthogonal in  $H$,
\item[(ii)] $\mu(\sum\limits_{j\in J}^{}p_j)=\sum\limits_{j\in J}^{}\mu(p_j)$,
\end{itemize}
where the series on the right hand side  are understood as the limit
of the net of finite partial sums (in the norm topology on  $H$).

Let $X\subset \mathcal{A}^{\pr}$ has the property
\begin{itemize}
\item[(iii)] $p,q\in X,\ pq=0\Rightarrow p+q\in X$.
\end{itemize}
We call  $\mu: X\to H$  \textit{a finitely additive orthogonal
vector measure on $X$} if the following condition is satisfied
$$
p,q\in X,\ pq=0\ \Rightarrow\   \langle \mu(p),\mu(q)\rangle=0,\ \mu(p+q)=\mu(p)+\mu(q).$$
\end{definition}

\medskip
 We are interested here in  $W^*$-algebras of type $I_2$.
It is  known that the every $W^*$-algebra $\mathcal{N}$ of type $I_2$ can be expressed
in the form $\mathcal{N} =\mathcal{M}\otimes M_2$ where $\mathcal{M}$ is a commutative $W^*$-algebra  and $M_2$
is the algebra of all $2\times 2$ matrices over $\mathbb{C}$.

We turn our attention to the structure of projections in algebra $\mathcal{N}$. We will consider
projections in $\mathcal{N}^{\pr}$ defined as follows:
$$
\pi_1\oplus\pi_2\equiv\left(\begin{array}{cc} \pi_1 & 0\\
0 & \pi_2 \end{array}\right),\qquad \pi_1,\pi_2\in\mathcal{M}^{\pr},$$
$$
p(x,v,\pi)\equiv \left(\begin{array}{cc} x & v(x(\pi-x))^{1/2}\\
v^*(x(\pi-x))^{1/2} & \pi-x \end{array}\right),$$
where $\pi\in\mathcal{M}^{\pr},\ v\in\mathcal{M}^{\un},\ 0\leq x\leq \pi,\
\rp(x(\pi-x))=\pi$. In particular, $p(0,v,0)=0$.

\medskip
The following two lemmas are fairly straightforward from equalities  $p=p^2=p^*$ for
a projection  $p$.

\begin{lemma}
Every projection  $p\in\mathcal{N}^{\pr}$ can be expressed in the form:
$$
p= \pi_1\oplus\pi_2 + p(x,v,\pi),$$
where $\pi_i\leq \textbf{1}-\pi,\ i=1,2$\footnote{Here and subsequently $\textbf{1}$ denote the
identity element in $\mathcal{M}$.}.
\end{lemma}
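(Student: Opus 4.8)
The plan is to write out $\mathcal N=\mathcal M\otimes M_2$ in matrix form and read off the parameters from $p=p^*=p^2$. Let $p=\left(\begin{array}{cc} a & b\\ c & d\end{array}\right)$ with $a,b,c,d\in\mathcal M$. From $p=p^*$ one gets $a=a^*$, $d=d^*$, $c=b^*$; and since $\mathcal M$ is commutative (so $bb^*=b^*b=:|b|^2$), the four entrywise consequences of $p=p^2$ collapse to
$$a-a^2=|b|^2=d-d^2,\qquad b(a+d-\mathbf 1)=0.$$
Because $a=a^*$ and $a-a^2=|b|^2\ge 0$, one has $0\le a\le\mathbf 1$, and likewise $0\le d\le\mathbf 1$.

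Next I would put $\pi:=\rp(bb^*)\in\mathcal M^{\pr}$ and split every entry along $\pi$ and $\mathbf 1-\pi$. Since $\rp(bb^*)=\rp(|b|^2)$ we have $(\mathbf 1-\pi)|b|^2=0$, so the identity $a-a^2=|b|^2$ shows that $(\mathbf 1-\pi)a$ and $(\mathbf 1-\pi)d$ are projections; set $\pi_1:=(\mathbf 1-\pi)a$, $\pi_2:=(\mathbf 1-\pi)d$, so $\pi_1,\pi_2\in\mathcal M^{\pr}$ and $\pi_i\le\mathbf 1-\pi$. On the complementary part, $b(a+d-\mathbf 1)=0$ together with $\pi b=b$ (as $\pi$ is the left support of $b$) yields $\pi a+\pi d=\pi$. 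Put $x:=\pi a$; then $0\le x\le\pi$, $\pi d=\pi-x$, and
$$a=\pi_1+x,\qquad d=\pi_2+(\pi-x).$$
Moreover $\pi(a-a^2)=x-x^2=x(\pi-x)$ while $(\mathbf 1-\pi)|b|^2=0$, so $|b|^2=x(\pi-x)$, hence $\rp(x(\pi-x))=\rp(bb^*)=\pi$ and $|b|=(x(\pi-x))^{1/2}$.

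Finally I would take the polar decomposition $b=v_0|b|$ in $\mathcal M$, where $v_0$ is a partial isometry with $v_0^*v_0=v_0v_0^*=\pi$, and replace $v_0$ by $v:=v_0+(\mathbf 1-\pi)\in\mathcal M^{\un}$; since $|b|=\pi|b|$ this does not change $v_0|b|=v|b|$, so $b=v(x(\pi-x))^{1/2}$ and, by commutativity, $c=b^*=v^*(x(\pi-x))^{1/2}$. Substituting the formulas for $a,b,c,d$ into the matrix of $p$ and separating off the diagonal block $\left(\begin{array}{cc}\pi_1 & 0\\ 0 & \pi_2\end{array}\right)$ gives precisely $p=\pi_1\oplus\pi_2+p(x,v,\pi)$, and the admissibility conditions $\pi\in\mathcal M^{\pr}$, $v\in\mathcal M^{\un}$, $0\le x\le\pi$, $\rp(x(\pi-x))=\pi$, together with $\pi_i\le\mathbf 1-\pi$, have all been verified along the way.

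I expect no genuine obstacle: the whole argument is a bookkeeping exercise in the abelian algebra $\mathcal M$. The points requiring care are keeping the $\pi$- and $(\mathbf 1-\pi)$-components strictly separated (so that $\pi_1,\pi_2$ really are orthogonal to $\pi$), deducing $0\le x\le\pi$ and $\rp(x(\pi-x))=\pi$ from the relations above, and upgrading the partial isometry of the polar decomposition to an honest unitary of $\mathcal M$ — each of which is routine.
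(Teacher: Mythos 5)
Your argument is correct and is exactly the computation the paper has in mind: the paper omits the proof, remarking only that the lemma is "fairly straightforward from the equalities $p=p^2=p^*$", and your entrywise analysis with $\pi=\rp(bb^*)$, $x=\pi a$, and the unitary $v=v_0+(\mathbf 1-\pi)$ is that straightforward verification carried out in full, with all admissibility conditions checked.
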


\medskip
We will denote
$$
\pi\setminus\rho\equiv \pi-\pi\rho,\quad
\pi\Delta\rho\equiv(\pi\setminus\rho)+(\rho\setminus\pi), \qquad \pi,\rho\in\mathcal{M}^{\pr}.$$
Let us observe some useful properties of the mentioned representation for projections.

\begin{lemma}
$p(x,v,\pi)p(y,w,\rho)=0$ if and only if
$$
y\pi\rho=({\bf 1}-x)\pi\rho,\quad w\pi\rho=-v\pi\rho.$$
In addition,
$$
p(x,v,\pi)+p(y,w,\rho)=\pi\rho\oplus \pi\rho + p(z,u,\pi\Delta\rho)$$
where  $z=x(\pi\setminus\rho)+y(\rho\setminus\pi)$ and  $u\in\mathcal{M}^{\un}$ satisfies equations:
$u(\pi\setminus\rho)=v(\pi\setminus\rho),\ u(\rho\setminus\pi)=w(\rho\setminus\pi)$.

 Specifically,  $p(x,v,\pi)p(y,w,\pi)=0$ if and only if $y\pi=(\textbf{1}-x)\pi,\quad w\pi=-v\pi$.
In addition,
$$
p(x,v,\pi)+p(\textbf{1}-x,-v,\pi)=\pi\oplus \pi.$$
\end{lemma}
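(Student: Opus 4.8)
The plan is to reduce the entire statement to elementary, entrywise computations in the commutative algebra $\mathcal{M}$. Put $e:=\pi\rho$ and $f:=e\oplus e=(\pi\oplus\pi)(\rho\oplus\rho)$. From $x\leq\pi$ and $y\leq\rho$ one reads off at once that $p(x,v,\pi)(\pi\oplus\pi)=p(x,v,\pi)$ and $(\rho\oplus\rho)p(y,w,\rho)=p(y,w,\rho)$, hence
\[
p(x,v,\pi)\,p(y,w,\rho)=\bigl(p(x,v,\pi)f\bigr)\bigl(f\,p(y,w,\rho)\bigr),
\]
so the product only sees the ``$e$-parts'' of the two projections. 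A one-line matrix multiplication gives
\[
p(x,v,\pi)f=\begin{pmatrix}X & vA\\ v^{*}A & e-X\end{pmatrix},\qquad
f\,p(y,w,\rho)=\begin{pmatrix}Y & wB\\ w^{*}B & e-Y\end{pmatrix},
\]
where $X:=x\pi\rho$, $Y:=y\pi\rho$, $A:=(x(\pi-x))^{1/2}\pi\rho$, $B:=(y(\rho-y))^{1/2}\pi\rho$; all of $X,Y,A,B$ and $c:=vw^{*}\pi\rho$ lie in the corner $e\mathcal{M}e$, with $A^{2}=X(e-X)$, $B^{2}=Y(e-Y)$, and $c$ a unitary of $e\mathcal{M}e$. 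The point at which the hypotheses $\rp(x(\pi-x))=\pi$, $\rp(y(\rho-y))=\rho$ enter is that they force $\rp(X)=\rp(e-X)=\rp(Y)=\rp(e-Y)=e$, so that $A$, $B$, $X$ and $e-X$ are all invertible in $e\mathcal{M}e$; without this the characterization fails.

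For \emph{sufficiency}: if $y\pi\rho=(\mathbf{1}-x)\pi\rho$ and $w\pi\rho=-v\pi\rho$, then $Y=e-X$, $B=A$ and $we=-ve$, and multiplying out the two matrices above (using $A^{2}=X(e-X)$ and commutativity) gives $p(x,v,\pi)p(y,w,\rho)=0$. For \emph{necessity}: suppose the product vanishes, so all four entries of the matrix product are $0$. After multiplying by a suitable unitary of $\mathcal{M}$, the $(2,1)$-entry reads $AY+c(e-X)B=0$, whence $B=-c^{*}(e-X)^{-1}AY$; since $(e-X)^{-1}AY$ is (positive and) invertible in $e\mathcal{M}e$, comparison with $B=B^{*}$ forces $c=c^{*}$, and then $B\geq0$ together with the strict positivity of $(e-X)^{-1}AY$ gives $-c\geq0$, so the self-adjoint unitary $c$ of the commutative algebra $e\mathcal{M}e$, being $\leq0$, must equal $-\pi\rho$; this is $w\pi\rho=-v\pi\rho$. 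Substituting $c=-\pi\rho$, the $(2,1)$-relation becomes $AY=(e-X)B$; squaring and cancelling the invertible element $(e-X)Y$ (via $A^{2}=X(e-X)$, $B^{2}=Y(e-Y)$) gives $XY=(e-X)(e-Y)$, i.e. $Y=e-X$, that is $y\pi\rho=(\mathbf{1}-x)\pi\rho$. The remaining three entry-equations are then automatic, by self-adjointness together with $A^{2}=X(e-X)$, $B^{2}=Y(e-Y)$.

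For the addition formula, still assuming $p(x,v,\pi)p(y,w,\rho)=0$, decompose every element along the mutually orthogonal projections $\pi\rho$, $\pi\setminus\rho$, $\rho\setminus\pi$, $\mathbf{1}-(\pi\vee\rho)$ of $\mathcal{M}$ (here $\pi\vee\rho=\pi+\rho-\pi\rho$). Off $\pi\vee\rho$ both summands and the proposed right-hand side are $0$. On $\pi\setminus\rho$ one has $\rho(\pi\setminus\rho)=0$, so only $p(x,v,\pi)$ contributes, restricting to $p(x(\pi\setminus\rho),v,\pi\setminus\rho)$, which is exactly the $(\pi\setminus\rho)$-part of $p(z,u,\pi\Delta\rho)$ because $z(\pi\setminus\rho)=x(\pi\setminus\rho)$, $(\pi\Delta\rho)(\pi\setminus\rho)=\pi\setminus\rho$ and $u(\pi\setminus\rho)=v(\pi\setminus\rho)$; the $\rho\setminus\pi$ case is symmetric. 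On $\pi\rho$, the relations $y\pi\rho=(\mathbf{1}-x)\pi\rho$ and $w\pi\rho=-v\pi\rho$ already proved make the $(\pi\rho)$-parts of $p(x,v,\pi)$ and $p(y,w,\rho)$ add up to $\pi\rho\oplus\pi\rho$, while the $(\pi\rho)$-part of $p(z,u,\pi\Delta\rho)$ vanishes since $(\pi\Delta\rho)(\pi\rho)=0$. Adding the four pieces yields the claimed identity, once one has checked that $z=x(\pi\setminus\rho)+y(\rho\setminus\pi)$ and any $u\in\mathcal{M}^{\un}$ with $u(\pi\setminus\rho)=v(\pi\setminus\rho)$, $u(\rho\setminus\pi)=w(\rho\setminus\pi)$ do satisfy $0\leq z\leq\pi\Delta\rho$ and $\rp(z(\pi\Delta\rho-z))=\pi\Delta\rho$ — all consequences of $x\leq\pi$, $y\leq\rho$, $\rp(x(\pi-x))=\pi$, $\rp(y(\rho-y))=\rho$ and $(\pi\setminus\rho)(\rho\setminus\pi)=0$. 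Finally the two displayed special cases are the case $\rho=\pi$: then $\pi\rho=\pi$, $\pi\setminus\rho=\rho\setminus\pi=\pi\Delta\rho=0$ and $p(z,u,\pi\Delta\rho)=p(0,u,0)=0$; moreover $p(x,v,\pi)+p(\mathbf{1}-x,-v,\pi)=\pi\oplus\pi$ is also immediate from the matrix entries.

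The step I expect to be the main obstacle is the necessity direction above: squeezing $c=vw^{*}\pi\rho=-\pi\rho$ and $Y=e-X$ out of the entry-equations uses, in an essential and combined way, self-adjointness \emph{and} positivity of $B$ together with the invertibility of $A$, $e-X$, $Y$ in $e\mathcal{M}e$ that the range-projection hypotheses provide. Everything else — the reduction to the corner $e\mathcal{M}e$, the sufficiency computation, and the case-by-case bookkeeping for the addition formula — is routine once these matrix forms are set up.
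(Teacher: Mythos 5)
Your overall strategy---reducing to the corner $e\mathcal{M}e$ with $e=\pi\rho$ and verifying everything entrywise---is exactly the kind of routine verification the paper intends (it offers no proof at all, declaring Lemmas 1 and 2 ``fairly straightforward'' from $p=p^2=p^*$), and your sufficiency computation, the identification of the special case $\rho=\pi$, and the four-piece bookkeeping along $\pi\rho$, $\pi\setminus\rho$, $\rho\setminus\pi$, $\mathbf{1}-(\pi\vee\rho)$ for the addition formula are all correct.

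The one step that would fail as literally written is the one you yourself single out as where the hypotheses enter: $\rp(x(\pi-x))=\pi$ does \emph{not} make $X$, $e-X$, $A$, $B$ invertible in $e\mathcal{M}e$. Full range projection only says the corresponding $L^{\infty}$-function is nonzero a.e.\ on $e$; it need not be bounded away from $0$ (take $\mathcal{M}=L^{\infty}(0,1)$ and $x(\omega)=\omega$, for which $\rp(x(\mathbf{1}-x))=\mathbf{1}$ but $x$ has no inverse). Consequently $(e-X)^{-1}$ may not exist, and the formula $B=-c^{*}(e-X)^{-1}AY$, the appeal to ``strict positivity'' of $(e-X)^{-1}AY$, and the later ``cancelling the invertible element $(e-X)Y$'' are unjustified as stated. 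The repair is easy and keeps your structure intact: in the commutative corner $e\mathcal{M}e\cong L^{\infty}(\pi\rho,\nu)$, full support is exactly what cancellation needs, since $sT=0$ with $\rp(T)=e$ forces $se=0$. Thus from $AY+c(e-X)B=0$ and its adjoint you get $(c-c^{*})(e-X)B=0$ with $\rp((e-X)B)=e$, hence $c=c^{*}$; positivity of $AY$, $(e-X)B$ and their full supports then force the self-adjoint unitary $c$ to equal $-e$; and squaring $AY=(e-X)B$ and cancelling $(e-X)Y$ (full support again) gives $Y=e-X$. Alternatively, argue pointwise a.e.: at a.e.\ point of $\pi\rho$ both matrices are rank-one projections in $M_2$, and two rank-one projections in $M_2$ have zero product exactly when they sum to the identity, which yields $y=\mathbf{1}-x$ and $w=-v$ a.e.\ on $\pi\rho$ at once. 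With ``invertible'' replaced by ``of full support, hence cancellable'' (or by this pointwise rank-one argument), your proof is correct in substance.
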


\begin{lemma}
Let $\mathcal{A}$ be a $W^*$-algebra, $m:\mathcal{A}^{\pr} \to\mathbb{R}_+$ be
a measure on projections and $\mu:\mathcal{A}^{\pr} \to H$ be a
finitely additive orthogonal vector measure with
$$
\|\mu(p)\|^2= m(p),\qquad p\in \mathcal{A}^{\pr}.$$
Then  $\mu$ is the orthogonal vector measure.
\end{lemma}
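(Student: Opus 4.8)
The plan is to deduce the full (net) additivity and orthogonality of $\mu$ entirely from its \emph{finite} additivity, together with the identity $\|\mu(p)\|^2=m(p)$ and the additivity of the scalar measure $m$. Fix a family $(p_j)_{j\in J}\subset\mathcal{A}^{\pr}$ of mutually orthogonal projections with $w^*$-sum $p=\sum_{j\in J}p_j$. Condition (i) of Definition~2 is immediate: for $i\ne j$ we have $p_ip_j=0$, so by the defining property of a finitely additive orthogonal vector measure $\langle\mu(p_i),\mu(p_j)\rangle=0$, hence $(\mu(p_j))_{j\in J}$ is orthogonal in $H$. It remains to verify (ii), i.e.\ that the net of finite partial sums $\bigl(\sum_{j\in F}\mu(p_j)\bigr)_F$, indexed by the finite subsets $F\subseteq J$ ordered by inclusion, converges in the norm of $H$ to $\mu(p)$.

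First I would record the order-theoretic fact that $\sum_{j\in F}p_j\le p$ for every finite $F\subseteq J$: the finite sub-sums form an increasing net of projections whose $w^*$-limit is $p$, so each of them is dominated by $p$. Consequently $q_F:=p-\sum_{j\in F}p_j$ is again a projection in $\mathcal{A}^{\pr}$, it is orthogonal to every $p_j$ with $j\in F$, and (subtracting a fixed element being $w^*$-continuous) $q_F=\sum_{j\in J\setminus F}p_j$ as a $w^*$-sum. Applying the finite additivity of $\mu$ to the orthogonal decomposition $p=\bigl(\sum_{j\in F}p_j\bigr)+q_F$ gives
\[ \mu(p)=\sum_{j\in F}\mu(p_j)+\mu(q_F),\qquad F\subseteq J\ \text{finite}. \]

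The key estimate is then $\|\mu(q_F)\|^2=m(q_F)=\sum_{j\in J\setminus F}m(p_j)$, where the last equality is Definition~1 applied to $q_F=\sum_{j\in J\setminus F}p_j$. Since $\sum_{j\in J}m(p_j)=m(p)<\infty$, the tails $\sum_{j\in J\setminus F}m(p_j)=m(p)-\sum_{j\in F}m(p_j)$ tend to $0$ along the net of finite subsets $F$; hence $\mu(q_F)\to 0$ in $H$. Passing to the limit in the displayed identity yields $\sum_{j\in F}\mu(p_j)\to\mu(p)$ in norm, which is exactly (ii); thus $\mu$ is an orthogonal vector measure. (As a by-product, not needed here, the same decomposition gives the Pythagorean identity $\|\mu(p)\|^2=\sum_{j\in F}\|\mu(p_j)\|^2+\|\mu(q_F)\|^2$, which by itself already shows the partial-sum net is Cauchy.) I do not expect a serious obstacle; the only points requiring a little care are that $q_F$ is genuinely a projection and that $m(q_F)$ equals the tail sum $\sum_{j\in J\setminus F}m(p_j)$ — both of which follow from the description of $w^*$-convergent sums of orthogonal projections and from the defining property of $m$ in Definition~1.
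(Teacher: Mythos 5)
Your proof is correct and follows essentially the same route as the paper: write $\mu(p)-\sum_{j\in F}\mu(p_j)=\mu\bigl(p-\sum_{j\in F}p_j\bigr)$ by finite additivity, note $\bigl\|\mu\bigl(p-\sum_{j\in F}p_j\bigr)\bigr\|^2=m\bigl(p-\sum_{j\in F}p_j\bigr)=m(p)-\sum_{j\in F}m(p_j)$, and let complete additivity of $m$ drive this to zero. The only cosmetic difference is that you compute $m(q_F)$ as the tail sum over $J\setminus F$ and verify condition (i) explicitly, which the paper treats as immediate.
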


\begin{proof}
It should be enough to verify the property  (ii) in Definition 2. Let
$p=\sum\limits_{j\in J}^{} p_j= \text{w}{}^*\text{-}\lim\limits_{\sigma}^{}\sum\limits_{j\in\sigma}^{}p_j$
(the limit of the net of finite partial sums). Since (ii) is fulfilled
for finite sums, we have
\begin{align}
\|\mu(p)-\sum\limits_{j\in\sigma}^{}\mu(p_j)\|^2&=\|\mu(p-\sum\limits_{j\in\sigma}^{}p_j)\|^2=m(p-
\sum\limits_{j\in\sigma}^{}p_j)\nonumber\\
&=m(p)-\sum\limits_{j\in\sigma}^{}m(p_j).\nonumber\end{align}
As  $m$ is completely  additive, it follows $\lim\limits_{\sigma}^{}[m(p)-\sum\limits_{j\in\sigma}^{}m(p_j)]=0$.
\end{proof}

We need also the following elementary lemma.

\begin{lemma}
A system of equations
\begin{displaymath}
\left\{\begin{array}{ll}
\lambda_1+\mu_1= &\lambda_0,\\
\lambda_2+\mu_2= &\mu_0,\\
\lambda_1^2+\lambda_2^2= &\lambda^2,\\
\mu_1^2+\mu_2^2= &\mu^2
\end{array}\right.\end{displaymath}
with respect to unknowns $\lambda_i,\mu_i\ (i=1,2)$ where
$$
\lambda_0^2+\mu_0^2=\lambda^2+\mu^2,$$
is solvable in $\mathbb{R}$. In this case
$$
\lambda_1\mu_1+\lambda_2\mu_2=0.$$
\end{lemma}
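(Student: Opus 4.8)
The plan is to reformulate the system in the Euclidean plane. I would set $a=(\lambda_1,\lambda_2)$, $b=(\mu_1,\mu_2)$, and $c=(\lambda_0,\mu_0)$ in $\mathbb{R}^2$: then the first two equations read $a+b=c$, the last two read $\|a\|^2=\lambda^2$ and $\|b\|^2=\mu^2$, and the compatibility hypothesis reads $\|c\|^2=\lambda^2+\mu^2$.

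With this dictionary the stated identity is immediate and can be disposed of first: for any solution $a,b$ one has $\|c\|^2=\|a+b\|^2=\|a\|^2+2\langle a,b\rangle+\|b\|^2=\lambda^2+\mu^2+2(\lambda_1\mu_1+\lambda_2\mu_2)$, so comparison with $\|c\|^2=\lambda^2+\mu^2$ forces $\lambda_1\mu_1+\lambda_2\mu_2=0$. It therefore remains only to produce one real solution.

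For existence I would first split off the degenerate case $c=0$, in which $\lambda^2+\mu^2=0$ forces $\lambda=\mu=0$ and $\lambda_i=\mu_i=0$ works. When $c\ne0$ I would seek $a$ in the form $a=\alpha e+\beta e^{\perp}$ with $e=c/\|c\|$ and $e^{\perp}$ a unit vector orthogonal to $e$, so that $b=c-a=(\|c\|-\alpha)e-\beta e^{\perp}$ automatically. The length conditions become $\alpha^2+\beta^2=\lambda^2$ and $(\|c\|-\alpha)^2+\beta^2=\mu^2$; subtracting them and inserting $\|c\|^2=\lambda^2+\mu^2$ collapses this to the linear equation $2\|c\|\alpha=2\lambda^2$, so $\alpha=\lambda^2/\|c\|$ is forced, and then $\beta^2=\lambda^2-\alpha^2=\lambda^2\mu^2/\|c\|^2\ge0$. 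Choosing $\beta=|\lambda\mu|/\|c\|$, reading off $\lambda_1,\lambda_2$ from $a$, and setting $\mu_1=\lambda_0-\lambda_1$, $\mu_2=\mu_0-\lambda_2$ then gives the desired solution.

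I do not expect a serious obstacle here: the argument is linear-algebraic once translated to $\mathbb{R}^2$. The one point that needs care — and the only place the hypothesis is actually used — is that $\beta^2$ comes out nonnegative and that the second length equation is consistent with the value of $\alpha$ forced by the first; both hold precisely because $\lambda_0^2+\mu_0^2=\lambda^2+\mu^2$.
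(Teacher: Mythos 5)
Your proof is correct. Note that the paper itself offers no argument for this lemma at all --- it is introduced with the words ``the following elementary lemma'' and used without proof in the construction of the measure $\mu$ --- so there is no authorial proof to compare against; your write-up simply supplies the missing elementary argument. The geometric reformulation is the natural one: with $a=(\lambda_1,\lambda_2)$, $b=(\mu_1,\mu_2)$, $c=(\lambda_0,\mu_0)$ the identity $\|c\|^2=\|a\|^2+2\langle a,b\rangle+\|b\|^2$ gives the orthogonality claim $\lambda_1\mu_1+\lambda_2\mu_2=0$ for \emph{every} solution, which is exactly the relation the paper needs in (8). Your existence argument is also sound: subtracting the two length equations and using $\|c\|^2=\lambda^2+\mu^2$ forces $\alpha=\lambda^2/\|c\|$, and then $\beta^2=\lambda^2-\alpha^2=\lambda^2\mu^2/\|c\|^2\ge 0$, with the second length equation recovered from the first together with the subtracted relation; the degenerate case $c=0$ is handled correctly because $\lambda^2,\mu^2\ge 0$ forces $\lambda=\mu=0$ there. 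One cosmetic remark: it is worth stating explicitly (as you do implicitly) that the data $\lambda,\mu,\lambda_0,\mu_0$ are real, so that $\lambda^2,\mu^2\ge 0$; this is what makes both the degenerate case and the nonnegativity of $\beta^2$ work, and it matches the paper's application where these quantities are values of the nonnegative functions $h_0,k_0,h_\gamma,k_\gamma$.
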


\bigskip
\centerline{\textbf{A  construction of the orthogonal vector measure}}

\bigskip
 Now  we examine some maximal commutative $W^*$-subalgebras in  $\mathcal{N}$ that will
useful for us. One such subalgebra is  $\mathcal{M}\oplus\mathcal{M}$, the direct sum of two copies of
$\mathcal{M}$,
$$
\mathcal{M}\oplus\mathcal{M}=\left\{\left(\begin{array}{cc} x & 0\\
0 & y \end{array}\right):\ x,y\in\mathcal{M}\right\}.$$

Next,  every pair  $(x,v)$ where
$$
x\in\mathcal{M}^+,\ x\leq\textbf{1},\ \rp(x(\textbf{1}-x))=
\textbf{1},\ v\in\mathcal{M}^{\un},\eqno(1)$$
can be associated with a maximal commutative $W^*$-subalgebra $\mathcal{N}_{x,v}$ in
$\mathcal{N}$ described by  the set of its projections
$$
\mathcal{N}_{x,v}^{\pr}=\{p(x\pi_1,v,\pi_1)+p((\textbf{1}-x)\pi_2,-v,\pi_2):\
\pi_i\in\mathcal{M}^{\pr},\ i=1,2\}.$$
It is easily seen that  $\mathcal{N}_{x,v}$ is maximal. Note that $\mathcal{N}_{x,v}=\mathcal{N}_{\textbf{1}-x,-v}$.

\bigskip Let us to index the set of all such pairs, and associate to each  $\gamma=(x,v)$
the set  $\mathcal{N}^{\pr}_{\gamma}\equiv \mathcal{N}^{\pr}_{x,v}$ and  associate to  $0$ the set
$\mathcal{N}^{\pr}_{0}\equiv\{\pi_1\oplus\pi_2: \pi_i\in \mathcal{M}^{\pr},i=1,2\}$.
Then we totally order the set  $\Gamma$ of all indices $\gamma$, taking $0=\min \Gamma$.

\bigskip
It is known (\cite[Proposition 1.18.1]{Sak}) that a commutative  $W^*$-algebra $\mathcal{M}$ may be
realized as $C^*$-algebra  $L^{\infty}(\Omega,\nu)$ of all essentially bounded locally $\nu$-measurable
functions on a localizable measure space ($\Omega, \nu)$ (i. e.  $\Omega$ is direct sum of finite measure
spaces, see \cite{Seg}). In this case, the Banach space $L^1(\Omega,\nu)$ is the predual of $L^{\infty}(\Omega,\nu)$:
$L^1(\Omega,\nu)^*=L^{\infty}(\Omega,\nu)$. Now  we shall identify  $\mathcal{M}$ with $L^{\infty}(\Omega,\nu)$.
In this case the characteristic functions
$$
\pi(\omega)\equiv\chi_{\pi}(\omega)=\left\{\begin{array}{ll}
1, &\text{if } \omega\in\pi,\\
0, & \text{if }\omega\not\in\pi,\end{array}\right.\qquad \pi\subset\Omega, $$
correspond to projections $\pi\in\mathcal{M}^{\pr}$.
(The reader will note to his regret that we use the same letter $\pi$ to designate three objects:
a projection in  $\mathcal{M}^{\pr}$, a $\nu$-measurable set in $\Omega$,  the characteristic function
of this set.) By virtue of classical integration theory, for every measure
$\sigma: L^{\infty}(\Omega,\nu)^{\pr}\to\mathbb{R}_+$ is determined uniquely a function
$h\in L^1(\Omega,\nu),\ h\geq0$, such that
$$
\sigma(\pi)=\int\pi(\omega)h(\omega)\nu(d\omega)=\int\limits_{\pi}h(\omega)\nu(d\omega).$$
In this approach, the  $W^*$-algebra $\mathcal{N}$ is realized as von Neumann algebra of
$2\times 2$-matrices  $(x_{ij}),\ x_{ij}\in L^{\infty}(\Omega,\nu)$ acting on
the orthogonal sum of two copies of Hilbert space  $L^2(\Omega,\nu)$:
$$
H=L^2(\Omega,\nu)\dotplus L^2(\Omega,\nu)=\left\{\left(\begin{matrix}f\\
g\end{matrix}\right):\ f,g\in L^2(\Omega,\nu)\right\}.$$

Next, let  $m:\mathcal{N}^{\pr}\to\mathbb{R}_+$ be a given measure on projections on $W^*$-algebra
$L^{\infty}(\Omega,\nu)\otimes M_2$. Let $0\leq h_0,k_0\in L^2(\Omega,\nu)$ such that
$$
m(\pi_1\oplus\pi_2)=\int\limits_{\pi_1}h_0^2(\omega)\nu(d\omega)+
\int\limits_{\pi_2}k_0^2(\omega)\nu(d\omega), \quad \pi_i\in\mathcal{M}^{\pr}.$$

\bigskip Similarly, there are $0\leq h_{\gamma},k_{\gamma}\in L^2(\Omega,\nu)$ such that
$$
m(p(x\pi,v,\pi))=\int\limits_{\pi}h_{\gamma}^2\,d\nu,\quad m(p((\textbf{1}-x)\pi,-v,\pi))=
\int\limits_{\pi}k_{\gamma}^2\,d\nu,\quad \pi\in\mathcal{M}^{\pr}.\eqno(2)$$

\bigskip
In addition,  Lemma 2 and the Radon-Nykodim theorem give
$$
h_{\gamma}^2(\omega)+k_{\gamma}^2(\omega)=h_0^2(\omega)+k_0^2(\omega)\quad\text{a.~e.}$$

\bigskip
We will now state the main result of this paper.

\begin{theorem}
Let $m:\mathcal{N}^{\pr}\to\mathbb{R}_+$ be a measure on projections in $W^*$-algebra $\mathcal{N}$ of
type $I_2$. Then there exist a Hilbert space  $H$ and an orthogonal vector measure
$\mu:\mathcal{N}^{\pr}\to H$ with property
$$
\|\mu(p)\|^2= m(p),\qquad p\in \mathcal{N}^{\pr}.$$
\end{theorem}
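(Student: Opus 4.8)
The plan is to build $\mu$ on all of $\mathcal{N}^{\pr}$ by first defining it compatibly on each maximal commutative subalgebra $\mathcal{N}_{\gamma}$ and then invoking Lemma~3 to upgrade finite additivity to full $\sigma$-additivity. Concretely, take the ambient Hilbert space to be a direct integral / direct sum large enough to carry, for each $\gamma\in\Gamma$, an orthonormal-type pair of $L^2$-valued ``coordinate fields''; the natural candidate is $\mathcal{H}=\bigoplus_{\gamma\in\Gamma}\big(L^2(\Omega,\nu)\dotplus L^2(\Omega,\nu)\big)$ (or the closure of the span of all vectors actually used). On $\mathcal{N}^{\pr}_0$ put
$$
\mu(\pi_1\oplus\pi_2)=\big(\pi_1 h_0\big)\mathbf{e}_0^{(1)}+\big(\pi_2 k_0\big)\mathbf{e}_0^{(2)},
$$
where $\mathbf{e}_0^{(1)},\mathbf{e}_0^{(2)}$ indicate the two $L^2$-slots in the $\gamma=0$ summand; then $\|\mu(\pi_1\oplus\pi_2)\|^2=\int_{\pi_1}h_0^2+\int_{\pi_2}k_0^2=m(\pi_1\oplus\pi_2)$, orthogonality of disjoint $\pi_i$ being immediate from disjointness of the characteristic functions inside each $L^2$-slot. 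Analogously, on $\mathcal{N}^{\pr}_{\gamma}$ for $\gamma=(x,v)$ set $\mu(p(x\pi_1,v,\pi_1)+p((\mathbf 1-x)\pi_2,-v,\pi_2))=(\pi_1 h_\gamma)\mathbf{e}_\gamma^{(1)}+(\pi_2 k_\gamma)\mathbf{e}_\gamma^{(2)}$, which matches $m$ on $\mathcal{N}^{\pr}_\gamma$ by (2) and is finitely additive and orthogonal within that subalgebra by the same characteristic-function argument (using Lemma~2's description of orthogonality inside $\mathcal{N}_{x,v}$).

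The crux is that these per-subalgebra definitions must be mutually consistent, because an individual projection $p$ lies in many of the $\mathcal{N}_\gamma$, and Lemma~1 shows every $p\in\mathcal{N}^{\pr}$ is $\pi_1\oplus\pi_2+p(x,v,\pi)$, so $p$ decomposes into a diagonal piece (handled in $\mathcal{N}_0$) plus a single $p(x,v,\pi)$ (handled in $\mathcal{N}_{(x,v)}$). I would use the total order on $\Gamma$ to define $\mu(p(x,v,\pi))$ by transfinite recursion: having fixed $\mu$ on all $\mathcal{N}_{\gamma'}$ with $\gamma'<\gamma$, define $\mu$ on the ``new'' part of $\mathcal{N}_\gamma$ — i.e.\ on those $p(x\pi,v,\pi)$ whose support $\pi$ is not already forced by a smaller index — using fresh basis vectors $\mathbf{e}_\gamma^{(i)}$ in a fresh orthogonal summand, and reusing the already-defined values on the overlap. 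The reason this is coherent is the key identity $h_\gamma^2+k_\gamma^2=h_0^2+k_0^2$ a.e.\ together with Lemma~4: whenever $p(x,v,\pi)$ splits as a sum $p(y,w,\rho)+p(y',w',\rho')$ coming from two already-treated subalgebras, Lemma~4 (applied pointwise with $\lambda_0=h_\gamma,\mu_0=k_\gamma$ and $\lambda,\mu,\lambda_?,\mu_?$ the relevant $h$'s and $k$'s) furnishes the pointwise Pythagorean relation $\lambda_1^2+\lambda_2^2=\lambda^2$, $\mu_1^2+\mu_2^2=\mu^2$ and the orthogonality $\lambda_1\mu_1+\lambda_2\mu_2=0$ guaranteeing that the vector $\mu(p)$ assembled from the pieces has the right norm and that cross terms vanish. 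This is exactly where the ``elementary'' Lemmas~2 and 4 do their real work.

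Once $\mu$ is defined on all of $\mathcal{N}^{\pr}$ with $\|\mu(p)\|^2=m(p)$ and with finite additivity plus orthogonality on each pair of orthogonal projections, I would verify the finitely-additive orthogonal vector measure axiom $(iii)$-style conditions globally: given orthogonal $p,q\in\mathcal{N}^{\pr}$, write both via Lemma~1, use Lemma~2 to identify $p+q$ again in the standard form, and check $\langle\mu(p),\mu(q)\rangle=0$ and $\mu(p+q)=\mu(p)+\mu(q)$ by reducing to the two-subalgebra overlap situation already handled. Finally, Lemma~3 promotes this finitely additive orthogonal vector measure — which exactly reproduces the completely additive $m$ via $\|\mu(\cdot)\|^2$ — to a genuine orthogonal vector measure in the sense of Definition~2, completing the proof; combined with Hamhalter's theorem for algebras without a type $I_2$ summand, the statement follows for arbitrary $W^*$-algebras. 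The main obstacle I anticipate is purely the bookkeeping of the transfinite construction: making precise ``the part of $\mathcal{N}_\gamma$ not already forced'' and checking that no circular dependency among the overlaps arises — but the total order on $\Gamma$ with $0=\min\Gamma$ is evidently introduced precisely to tame this, and the pointwise nature of all the constraints (everything reduces to real scalar identities a.e.\ on $\Omega$) keeps each individual verification routine.
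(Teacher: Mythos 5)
Your overall skeleton (transfinite induction over the totally ordered set $\Gamma$, densities $h_\gamma,k_\gamma$ obtained from the Radon--Nikodym theorem, the relation $h_\gamma^2+k_\gamma^2=h_0^2+k_0^2$ a.e., Lemma 4 applied pointwise, and Lemma 3 to pass from finite to complete additivity) matches the paper, but the central definitional step is wrong. Placing the values of $\mu$ on $\mathcal{N}_\gamma^{\pr}$ in a \emph{fresh orthogonal summand} spanned by $\mathbf e_\gamma^{(1)},\mathbf e_\gamma^{(2)}$ cannot work. By Lemma 2, $p(x\pi,v,\pi)+p((\mathbf 1-x)\pi,-v,\pi)=\pi\oplus\pi$, so $\mathcal{N}_\gamma^{\pr}$ already contains every diagonal projection $\pi\oplus\pi$; your formula assigns it $(\pi h_\gamma)\mathbf e_\gamma^{(1)}+(\pi k_\gamma)\mathbf e_\gamma^{(2)}$, while the $\gamma=0$ definition assigns $(\pi h_0)\mathbf e_0^{(1)}+(\pi k_0)\mathbf e_0^{(2)}$ --- two different vectors in mutually orthogonal summands --- so $\mu$ is not even well defined. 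Equivalently, finite additivity forces $\mu(p(x\pi,v,\pi))+\mu(p((\mathbf 1-x)\pi,-v,\pi))$ to equal the already-fixed vector $\mu(\pi\oplus\pi)$ lying in the $\gamma=0$ summand, which is impossible for nonzero vectors confined to a summand orthogonal to it. This constraint binds \emph{every} new projection $p(x\pi,v,\pi)$ to the level-$0$ data, not only those ``already forced by a smaller index'', so it cannot be dissolved by more careful bookkeeping of overlaps: the new values must be non-orthogonal to the old ones.

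The paper resolves exactly this point by keeping all values in the single space $H=L^2(\Omega,\nu)\dotplus L^2(\Omega,\nu)$ and, for each $\gamma$, \emph{solving} pointwise a.e.\ the system $h_{\gamma1}+k_{\gamma1}=h_0$, $h_{\gamma2}+k_{\gamma2}=k_0$, $h_{\gamma1}^2+h_{\gamma2}^2=h_\gamma^2$, $k_{\gamma1}^2+k_{\gamma2}^2=k_\gamma^2$; Lemma 4 guarantees solvability precisely because $h_\gamma^2+k_\gamma^2=h_0^2+k_0^2$ a.e., and yields $h_{\gamma1}k_{\gamma1}+h_{\gamma2}k_{\gamma2}=0$ for free. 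Setting $\mu(p(x\pi,v,\pi))=\binom{\pi h_{\gamma1}}{\pi h_{\gamma2}}$ and $\mu(p((\mathbf 1-x)\pi,-v,\pi))=\binom{\pi k_{\gamma1}}{\pi k_{\gamma2}}$ splits the already-assigned vector $\mu(\pi\oplus\pi)$ into two orthogonal pieces of the prescribed norms, so Lemma 4 is not a consistency check on overlaps, as in your sketch, but the device that \emph{defines} the new values compatibly with level $0$. The genuine overlap with earlier indices (those $\pi$ with $x\pi=y\pi$, $v\pi=w\pi$ for some smaller $(y,w)$) is treated separately via $\pi_0=\sup\mathcal{P}$: the previously chosen solutions are reused on $\pi_0$ and the system is solved afresh only on $\mathbf 1-\pi_0$. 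Without relocating the values in this way your construction never produces a finitely additive orthogonal vector measure, so Lemma 3 cannot be invoked and the argument does not close.
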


\begin{proof}
Define an orthogonal vector measure $\mu$ on the set $[0]\equiv \mathcal{N}_0^{\pr}$ via
$$
\mu(\pi_1\oplus\pi_2)\equiv\left(\begin{matrix}\pi_1h_0\\
\pi_2k_0\end{matrix}\right),\quad \pi_1,\pi_2\in\mathcal{M}^{\pr}.$$
We next extend $\mu$ to an orthogonal vector measure on the set $[0,1]$ of all projections in
the form
$$
p= \pi_1\oplus\pi_2 + p(x\pi_3,v,\pi_3)+ p((\textbf{1}-x)\pi_4,-v,\pi_4),\quad\pi_i\in\mathcal{M}^{\pr},\eqno(3)$$
where $(x,v)$ is a pair in (1) corresponding  to index $1\equiv\min (\Gamma\setminus\{0\})$.
According to Lemma 2, it is possible to assume that $\pi_3\pi_4=0$. Thus,
$$
\pi_1\pi_3=\pi_1\pi_4=\pi_2\pi_3=\pi_2\pi_4=\pi_3\pi_4=0.$$
One can easily see that the set $[0,1]$ satisfies (iii) in Definition 2. In view of Lemma 3 there are
real functions $0\leq h_{1i},k_{1i}\in L^2(\Omega,\nu), i=1,2$ such that equalities
\begin{align}
h_{11}(\omega)+k_{11}(\omega)&=h_0(\omega),\tag*{(4)}\\
h_{12}(\omega)+k_{12}(\omega)&=k_0(\omega),\tag*{(5)}\\
h_{11}^2(\omega)+h_{12}^2(\omega)&=h_1^2(\omega),\tag*{(6)}\\
k_{11}^2(\omega)+k_{12}^2(\omega)&=k_1^2(\omega),\tag*{(7)}
\end{align}
$$
h_{11}(\omega)k_{11}(\omega)+h_{12}(\omega)k_{12}(\omega)=0.\eqno(8)$$
hold a.~e. (Here the functions $h_1,k_1$ in (2) correspond to index $\gamma=1$.)

We now extend the function $\mu$ to projections in the form (3) putting
$$
\mu(p)\equiv\left(\begin{matrix}\pi_1h_0+\pi_3h_{11}+\pi_4k_{11}\\
\pi_2k_0+\pi_3h_{12}+\pi_4k_{12}\end{matrix}\right),\eqno(27)$$
where  $h_{1i},k_{1i}$ are solutions of the system (4) --- (7).
Direct computations with application (4) -- (8) show that $\mu$ is
the finitely additive orthogonal vector measure on $[0,1]$.

Suppose (inductive hypothesis) that $\mu$ is extended to a finitely additive orthogonal vector
measure on $[0,\gamma)$,
$$
[0,\gamma)\equiv \{p_1+...+p_s: p_j\in\mathcal{N}^{\pr}_{\gamma_j}, p_jp_k=0\ (j\ne k),\ \gamma_j<\gamma\},$$
and $\gamma=(y,w)$. Let $0\leq h_{\gamma}, k_{\gamma}\in L^2(\Omega,\nu)$ such that
$$
m(p_{\gamma})=\int\limits_{\rho_1}h_{\gamma}^2\,d\nu+\int\limits_{\rho_2}k_{\gamma}^2\,d\nu,\eqno(9)$$
where
$$
p_{\gamma}= p(y\rho_1,w,\rho_1)+p((\textbf{1}-y)\rho_2,-w,\rho_2),\quad \rho_1\rho_2\in\mathcal{M}^{\pr}. \eqno(10)$$
Let
$$
\mathcal{P}=\{\pi\in\mathcal{M}^{\pr}: \exists (x,v)<(y,w)\ (x\pi=y\pi,\ v\pi=w\pi)\},$$
and  $(\pi_j)_{j\in J}\subset\mathcal{P}$ be a maximal set of pairwise orthogonal projections in $\mathcal{P}$
(it exists by Zorn's theorem). Define  $\pi_0\equiv\sum\limits_{j}^{}\pi_j(=\sup\mathcal{P})$.

With the above notations we have
\begin{align}
p(y,w,\textbf{1})&=p(y(\textbf{1}-\pi_0),w,\textbf{1}-\pi_0)+\sum\limits_{j}^{}p(y\pi_j,w,\pi_j)\nonumber\\
&=p(y(\textbf{1}-\pi_0),w,\textbf{1}-\pi_0)+\sum\limits_{j}^{}p(x_j\pi_j,v_j,\pi_j),\nonumber\\
p(\textbf{1}-y,-w,\textbf{1})&=p((\textbf{1}-y)(\textbf{1}-\pi_0),-w,\textbf{1}-\pi_0)+
\sum\limits_{j}^{}p((\textbf{1}-x_j)\pi_j,-v_j,\pi_j),
\nonumber\end{align}
where  $(x_j,v_j)<(y,w)$.

By inductive hypothesis there defined the functions $h_{j1},k_{j1},h_{j2},k_{j2}\in L^2(\Omega,\nu)$
satisfying equalities
\begin{align}
h_{j1}(\omega)+k_{j1}(\omega)&=h_0(\omega),\nonumber\\
h_{j2}(\omega)+k_{j2}(\omega)&=k_0(\omega),\nonumber\\
h_{j1}^2(\omega)+h_{j2}^2(\omega)&=h_j^2(\omega),\nonumber\\
k_{j1}^2(\omega)+k_{j2}^2(\omega)&=k_j^2(\omega),\nonumber
\end{align}
$$
h_{j1}(\omega)k_{j1}(\omega)+h_{j2}(\omega)k_{j2}(\omega)=0.$$
where the density functions  $h_j,k_j$ correspond to pairs $(x_j,v_j)$ according to (2).
We also find the functions $\widetilde h_{\gamma 1},\widetilde k_{\gamma 1},\widetilde h_{\gamma 2},
\widetilde k_{\gamma 2}\in L^2(\Omega,\nu)$ that are solutions of equations
 \begin{align}
\widetilde h_{\gamma 1}(\omega)+\widetilde k_{\gamma 1}(\omega)&=h_0(\omega),\nonumber\\
\widetilde h_{\gamma 2}(\omega)+\widetilde k_{\gamma 2}(\omega)&=k_0(\omega),\nonumber\\
\widetilde h_{\gamma 1}^2(\omega)+\widetilde h_{\gamma 2}^2(\omega)&=h_{\gamma}^2(\omega),\nonumber\\
\widetilde k_{\gamma 1}^2(\omega)+\widetilde k_{\gamma 1}^2(\omega)&=k_{\gamma}^2(\omega),\nonumber
\end{align}
where $h_{\gamma}, h_{\gamma}$ are defined by (9). Therefore, there are defined the functions
 \begin{align}
h_{\gamma 1}(\omega)&\equiv (\textbf{1}-\pi_0)\widetilde h_{\gamma 1}(\omega)+
\sum\limits_{j}^{}\pi_j(\omega)h_{j1}(\omega),\nonumber\\
 h_{\gamma 2}(\omega)&\equiv (\textbf{1}-\pi_0)\widetilde h_{\gamma 2}(\omega)+
 \sum\limits_{j}^{}\pi_j(\omega)h_{j2}(\omega),\nonumber\\
k_{\gamma 1}(\omega)&\equiv (\textbf{1}-\pi_0)^2(\omega)\widetilde k_{\gamma 1}(\omega)+
\sum\limits_{j}^{}\pi_j(\omega)k_{j1}(\omega),\nonumber\\
k_{\gamma 2}(\omega)&\equiv (\textbf{1}-\pi_0)\widetilde k_{\gamma 2}(\omega)+
\sum\limits_{j}^{}\pi_j(\omega)k_{j2}(\omega).\nonumber
\end{align}
In this case
$$
h_{\gamma 1}^2(\omega)+h_{\gamma 2}^2(\omega)=h_{\gamma}^2(\omega),\quad k_{\gamma 1}^2(\omega)+
k_{\gamma 2}^2(\omega)=k_{\gamma}^2(\omega)\quad \text{a.~e.,}$$
$$
h_{\gamma 1}(\omega)k_{\gamma 1}(\omega)+ h_{\gamma 2}(\omega)k_{\gamma 2}(\omega)=0\quad \text{a.~e.}$$
Now we put
$$
\mu(p+p_{\gamma })\equiv \mu(p)+\left(\begin{matrix}\rho_1h_{\gamma  1}+\rho_2k_{\gamma  1}\\
\rho_1h_{\gamma  2}+\rho_2k_{\gamma  2}\end{matrix}\right).$$
where $p\in [0,\gamma)$ and $p_{\gamma}$ is defined by (10). Again, direct computations show that
$\mu$ is the finitely additive orthogonal vector
measure on $[0,\gamma]$.

In view of Lemma 1 it follows that $\mu$ turned out extended to $\mathcal{N}^{\pr}$. Applying
Lemma 3 we complete the proof.
\end{proof}

\begin{cor}
Let  $m:\mathcal{A}^{\pr}\to\mathbb{R}_+$ be a measure on projections in an
arbitrary  $W^*$-algebra $\mathcal{A}$. Then there exist complex Hilbert space
 $H$ and an orthogonal vector measure $\mu:\mathcal{A}^{\pr}\to H$ such that
$$
\|\mu(p)\|^2=m(p),\qquad p\in\mathcal{A}^{\pr}.$$
\end{cor}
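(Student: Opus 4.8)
The plan is to reduce the general case to the two situations already settled: the type $I_2$ case established in the Theorem above, and the case of a $W^*$-algebra with no direct summand of type $I_2$, which is Hamhalter's theorem \cite{Ham} recalled in the introduction. By the standard type decomposition of $W^*$-algebras (see, e.g., \cite{Sak}) there is a central projection $z\in\mathcal{A}^{\pr}$ such that $\mathcal{A}_2\equiv z\mathcal{A}$ is of type $I_2$ while $\mathcal{A}_1\equiv(\textbf{1}-z)\mathcal{A}$ has no direct summand of type $I_2$; either summand may be zero, in which case the claim reduces directly to one of the two known results and there is nothing more to prove.

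Since $z$ is central, every $p\in\mathcal{A}^{\pr}$ splits as $p=p(\textbf{1}-z)+pz$ with $p(\textbf{1}-z)\in\mathcal{A}_1^{\pr}$, $pz\in\mathcal{A}_2^{\pr}$ and $p(\textbf{1}-z)\perp pz$, so that $m(p)=m(p(\textbf{1}-z))+m(pz)$. The restrictions $m_i\equiv m|_{\mathcal{A}_i^{\pr}}$ are measures on projections on $\mathcal{A}_i$ in the sense of Definition 1 (complete additivity is inherited by a direct summand). By Hamhalter's theorem there are a Hilbert space $H_1$ and an orthogonal vector measure $\mu_1:\mathcal{A}_1^{\pr}\to H_1$ with $\|\mu_1(q)\|^2=m_1(q)$, and by the Theorem there are a Hilbert space $H_2$ and an orthogonal vector measure $\mu_2:\mathcal{A}_2^{\pr}\to H_2$ with $\|\mu_2(q)\|^2=m_2(q)$.

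It remains to glue the two pieces. Put $H\equiv H_1\dotplus H_2$ and define
$$
\mu(p)\equiv\mu_1(p(\textbf{1}-z))\oplus\mu_2(pz),\qquad p\in\mathcal{A}^{\pr}.
$$
The norm identity is immediate, using additivity of $m$ over the orthogonal pair $p(\textbf{1}-z),\,pz$:
$$
\|\mu(p)\|^2=\|\mu_1(p(\textbf{1}-z))\|^2+\|\mu_2(pz)\|^2=m_1(p(\textbf{1}-z))+m_2(pz)=m(p).
$$
To verify that $\mu$ is an orthogonal vector measure, let $(p_j)_{j\in J}\subset\mathcal{A}^{\pr}$ be mutually orthogonal. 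Then $(p_j(\textbf{1}-z))_j$ and $(p_jz)_j$ are mutually orthogonal in $\mathcal{A}_1^{\pr}$ and $\mathcal{A}_2^{\pr}$ respectively, so property (i) for $\mu_1$ and $\mu_2$ gives $\langle\mu(p_i),\mu(p_j)\rangle=\langle\mu_1(p_i(\textbf{1}-z)),\mu_1(p_j(\textbf{1}-z))\rangle+\langle\mu_2(p_iz),\mu_2(p_jz)\rangle=0$ when $i\ne j$. For property (ii), if $p=\sum_j p_j$ as a $w^*$-limit of finite partial sums, then, right multiplication by the fixed element $z$ (resp. $\textbf{1}-z$) being $w^*$-continuous, $pz=\sum_j p_jz$ and $p(\textbf{1}-z)=\sum_j p_j(\textbf{1}-z)$ are the corresponding $w^*$-limits of finite partial sums; applying property (ii) for $\mu_1$ and for $\mu_2$ and passing to the norm of $H=H_1\dotplus H_2$ gives $\mu(p)=\sum_j\bigl(\mu_1(p_j(\textbf{1}-z))\oplus\mu_2(p_jz)\bigr)=\sum_j\mu(p_j)$.

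No essential difficulty arises; the only points needing attention are the bookkeeping of the type decomposition and the observation that $x\mapsto xz$ is $w^*$-continuous, which is exactly what allows complete additivity to survive the gluing. (Alternatively, one may check only finite additivity of $\mu$ together with the norm identity and then invoke Lemma 3 to upgrade $\mu$ to a genuine orthogonal vector measure.)
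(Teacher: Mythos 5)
Your proof is correct and takes essentially the same route as the paper: the paper's own argument is a one-line appeal to the fact that an orthogonal vector measure is determined by its restrictions to direct summands, combining Theorem 5 on the type $I_2$ summand $z\mathcal{A}$ with Hamhalter's theorem on $(\textbf{1}-z)\mathcal{A}$. You have merely written out explicitly the gluing on $H_1\dotplus H_2$ and the $w^*$-continuity of multiplication by the central projection, details the paper leaves implicit.
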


\begin{proof}
Because an  orthogonal vector measure is uniquely defined by its restrictions
to direct summands of a $W^*$-algebra, the statement follows by Theorem 5 and the proof of Theorem in
\cite{Ham}.
\end{proof}


\begin{thebibliography}{99}

\bibitem{Ham}
HAMHALTER, J.: \textit{ States on  $W^*$-algebras and orthogonal vector measures,}
Proc. Amer. Math. Soc., \textbf{110}, (1990), No.~3, 803--806.

\bibitem{Mae} MAEDA, S.: \textit{Probability measures on projections in von Neumann algebras,}
Reviews in Math. Physics, \textbf{1}, (1990), No.~2--3, 235--290.

\bibitem{She} SHERSTNEV, A.~N.: \textit{ Methods of bilinear forms in non-commutative measure and integral theory,}
Fizmatlit, Moscow, 2008 (Russian).

\bibitem{Sak} SAKAI, S.: \textit{ $C^*$-Algebras and $W^*$-Algebras,}
Springer-Verlag, N.Y.--Heidelberg--Berlin,  1971.

\bibitem{Seg} SEGAL, I., \textit{Equivalence of measure spaces,}
Amer. J. Math., \textbf{73}, (1951), 275--313.
\end{thebibliography}
\end{document}